\newcounter{aux1}
\newcounter{aux2}
\newtheorem{theorem}{Theorem}
\newtheorem{lemma}[theorem]{Lemma}
\newtheorem{proposition}[theorem]{Proposition}
\newtheorem{definition}[theorem]{Definition}
\newtheorem{question}[theorem]{Question}
\newcommand{\supp}{\operatorname{supp}}
\newcommand{\Aff}{\mathscr{A}}
\newcommand{\RR}{\mathbb{R}}
\newcommand{\C}{\ell_{\infty}(G)}
\newcommand{\EL}{\mathcal{L}}
\title[Every group is weakly exact]{Every finitely generated group is weakly exact}
\author{Ronald G. Douglas}
\address{Department of Mathematics, Texas A\&M University, College Station, TX 77843-3368}
\email{rdouglas@math.tamu.edu}
\author{Piotr W. Nowak}
\address{Mathematical Sciences Research Institute, 17 Gauss Way, Berkeley, CA, USA}
\address{Institute of Mathematics of the Polish Academy of Sciences, \'{S}niadeckich 8, 00-956 Warszawa, Poland}
\address{Institute of Mathematics, University of Warsaw, Banacha 2, 02-097 Warszawa, Poland}
\email{pnowak@mimuw.edu.pl}
\keywords{exact group; bounded cohomology; Hochschild cohomology; invariant expectation}
\thanks{The second author was supported by NSF grant DMS-0900874}
\begin{document}

\begin{abstract}
We show that every finitely generated group admits weak analogues of an 
invariant expectation, whose existence characterizes exact groups.
This fact has a number of applications. We show that Hopf $G$-modules are relatively injective,
which implies that bounded cohomology groups with coefficients in all Hopf $G$-modules
vanish in all positive degrees. 
We also prove a general fixed point theorem for actions of finitely generated groups on 
$\ell_{\infty}$-type spaces. Finally, we define the notion of weak exactness for 
certain Banach algebras. 
\end{abstract}

\maketitle

In our previous work \cite{douglas-nowak}, we  studied exact groups and their
bounded cohomology.
We  also introduced the notion of Hopf $G$-modules, a class of bounded Banach
$G$-modules which are additionally equipped with a natural representation of the 
algebra $\ell_{\infty}(G)$. This work initiated the consideration of $G$-modules with an additional 
representation of a $G$-$C^*$-algebra as coefficients for the bounded cohomology. The techniques
of \cite{douglas-nowak} provided some of the key new ingredients of the
characterization of amenable actions, and in particular of exact groups, 
in terms of bounded cohomology  \cite{brodzki-niblo-nowak-wright}, \cite{monod}.

These recent results allow one to view various amenability-like properties via bounded cohomology 
in a unified manner.
The strength of these amenability-like properties corresponds precisely to the extent of the class 
of bounded $G$-modules for which the bounded cohomology vanishes.  In Johnson's classic 
theorem \cite{johnson-memoir}
characterizing amenability, this class consists of all dual modules. 
Topological amenability of an action of a group
$G$ on a compact space $X$ is detected by a subclass, the class of duals of 
$\ell_1$-geometric $G$-modules
which are additionally equipped with a compatible representation of $C(X)$ 
(see \cite{brodzki-niblo-nowak-wright}).
In this note  we are considering the class of dual Hopf $G$-modules introduced in 
\cite{douglas-nowak}. These modules 
correspond to $X=\beta G$, the Stone-\v{C}ech compactification of $G$, and certain particular representations of  $C(\beta G)\simeq\ell_{\infty}(G)$
and constitute a subclass of the previously discussed classes of test modules.

One can similarly compare various notions of amenability using averaging operators.
Amenable groups are precisely the groups for which there exists an invariant mean, 
a positive operator $\ell_{\infty}(G)\to \RR$ which is invariant under the group action.
Exact groups are characterized by the existence of an invariant expectation \cite{douglas-nowak},
a map $M:\EL(\ell_u(G),\ell_{\infty}(G))\to \ell_{\infty}(G)$, where $\EL(X,Y)$ is the space of 
bounded linear
operators from $X$ to $Y$ and $\ell_u(G)$ is the uniform convolution algebra of $G$, see 
\cite{douglas-nowak}.
The invariant expectation is required to  commute with the actions of  $G$.

The invariant expectation was the main tool used in the vanishing theorem for bounded
cohomology in \cite{douglas-nowak}. 
It also gives a convenient way to weaken 
or strengthen exactness by enlarging or reducing the space on which such an expectation is defined.
This led us to consider a condition which we initially  called \emph{weak exactness} and which
was expressed in terms of the existence of a weaker notion of an invariant expectation, defined on a space smaller than
the one needed for exactness. As it turns out,  this condition is rather mild.

\begin{theorem}
A weak invariant expectation (with coefficients in any dual module) 
exists on every finitely generated group.
\end{theorem}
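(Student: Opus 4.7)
The plan is to construct the weak invariant expectation as a weak$^*$ cluster point of finite averages over Cayley balls, using Banach--Alaoglu compactness to bypass the obstruction that prevents full invariance in the non-amenable case.

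Fix a finite symmetric generating set of $G$ and let $B_n$ denote the corresponding ball of radius $n$ about the identity in the word metric. For a dual Banach $G$-module $F = E^*$, define for each $n$ an approximate expectation
\[
M_n \colon \mathcal{L}(\ell_u(G), F) \longrightarrow F,
\qquad
M_n(T) \;=\; \frac{1}{|B_n|}\sum_{g \in B_n} g^{-1}\cdot T(u_g),
\]
where $u_g \in \ell_u(G)$ is a suitable seed element localized at $g$ (e.g.\ the translate of a fixed unit vector). Each $M_n$ has norm at most $1$, and since $F$ is a dual space its unit ball is weak$^*$-compact, so Banach--Alaoglu applied pointwise produces a weak$^*$ cluster point
\[
M(T) \;=\; w^*\text{-}\lim_{n\to\omega} M_n(T)
\]
along a non-principal ultrafilter $\omega$ on $\mathbb{N}$. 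Linearity, boundedness, and positivity (where applicable) of $M$ are inherited from the $M_n$. It is crucial that $F$ be a dual module, so that the limit lies in $F$ itself rather than in some enlargement; this accounts for the dual-module hypothesis in the statement.

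The substantive step is verifying the prescribed weakened form of $G$-equivariance. For $h \in G$ and an operator $T$ in the subspace on which weak invariance is imposed, the defect
\[
h\cdot M_n(T) - M_n(h\cdot T) \;=\; \frac{1}{|B_n|} \sum_{g \in hB_n \triangle B_n} (\pm)\, g^{-1}\cdot T(u_g)
\]
is controlled by the relative size of the boundary $hB_n \triangle B_n$. For amenable $G$ a F\o{}lner subsequence would make this ratio vanish and yield an honest invariant expectation, consistent with amenability implying exactness. In the non-amenable case the ratio cannot be driven to zero uniformly in $T$; this reflects the fact that full exactness truly fails, and forces one to restrict the equivariance condition to a smaller class of operators in order to obtain a statement that nevertheless holds universally.

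The main obstacle is precisely this last point: matching the cluster-point construction with the operator subclass isolated in the definition of a weak invariant expectation, and showing that on this subclass the boundary defect disappears along the ultrafilter $\omega$ without recourse to a F\o{}lner condition. Once this is arranged, the averaging construction automatically delivers the remaining properties (norm, positivity, compatibility with the $\ell_\infty(G)$-action inherent to the Hopf structure), and the theorem follows. The key design choice is therefore how much invariance to demand of the expectation---just enough to fuel the applications to bounded cohomology of dual Hopf $G$-modules promised in the abstract, but not so much as to collapse back to exactness.
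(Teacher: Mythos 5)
Your proposal has a genuine gap, and it stems from a misreading of where the ``weakness'' in a weak invariant expectation lies. You locate it in a weakened equivariance condition imposed on a restricted class of operators, and you then face what you yourself flag as the main obstacle: the defect $h\cdot M_n(T)-M_n(h\cdot T)$ is controlled by $|hB_n \triangle B_n|/|B_n|$, which cannot be driven to zero for non-amenable $G$. You leave precisely this step --- showing that the boundary defect disappears along the ultrafilter without a F\o{}lner condition --- unproved, and it cannot be proved along these lines: F\o{}lner-free vanishing of that ratio is exactly what fails outside the amenable case. The actual weakening in the paper is in the \emph{domain}, not in the invariance. The expectation is defined only on $\ell_{\infty}(G,\ell_{\infty}(G,X^*))\simeq \ell_{\infty}(G\times G,X^*)$, a proper subspace of the operator space $\EL(\ell_u(G),\ell_{\infty}(G))$ that full exactness requires, and on this smaller domain \emph{full} $G$-equivariance holds. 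The construction is then completely explicit and needs no averaging, no ultrafilter, and no compactness argument: one sets $(Ef)(g)=f(g,g)$, restriction to the diagonal, after which $E(g\star f)=g*E(f)$ and $E(af)=a\bullet E(f)$ are immediate computations.

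There is a second, independent defect in your construction: the scalar-normalized averages $M_n(T)=\frac{1}{|B_n|}\sum_{g\in B_n} g^{-1}\cdot T(u_g)$ cannot satisfy the $\ell_{\infty}(G)$-module condition (property (2) of the paper's theorem), which is essential for the Hopf-module applications; sums of translates with scalar weights are not $\ell_{\infty}(G)$-linear. The paper's approximating kernels are instead $\ell_{\infty}(G)$-valued partitions of unity, $\xi_n=1_{B(n)}\Delta+1_{G\setminus B(n)}\delta_e$ with $\Delta$ the characteristic function of the diagonal: each $\xi_n$ is finitely supported in the first variable, positive, and satisfies $\sum_{g\in G}(\xi_n)_g=1_G$. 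Convergence $\langle\xi_n,\cdot\rangle_{\C}\to E$ in the weak-* operator topology is then verified directly by pairing against finitely supported $\eta\in\ell_1(G,X)$: once $\supp\eta\subseteq B(n_0)$, the pairing equals $\langle Ef,\eta\rangle$ exactly for all $n\ge n_0$, so the sequence converges outright and no cluster point is needed. In short, the missing idea is the diagonal restriction on the smaller domain; without it, the averaging route collapses at exactly the point you identified.
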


Despite such generality weak invariant expectations turn out to be very
useful. We present here three applications.

First we apply the weak invariant
expectation to show that weak-* closed Hopf $G$-modules are relatively injective bounded
Banach $G$-modules (Theorem \ref{theorem : relative injectivity of Hopf G-modules}).
In particular, this implies that 
bounded cohomology groups with coefficients in weak-* closed Hopf $G$-modules vanish 
(Theorem \ref{theorem : vanishing of cohomology}) in all positive degrees.
Hopf $G$-modules are Banach subspaces of $\ell_{\infty}(G,X^*)$, where $X$ is a bounded $G$-module,
which are closed with respect to both the natural action of $G$ and the multiplicative
action of $\ell_{\infty}(G)$ and are additionally closed in the weak-* topology. 
 In \cite{douglas-nowak} we conjectured that vanishing of bounded cohomology with coefficients in
 weak-* closed Hopf $G$-modules characterizes exactness. The vanishing theorem 
 established here, somewhat surprisingly, 
 disproves this conjecture.

The second application concerns fixed points for group actions. A classic result of 
M. M. Day \cite{day} is a a characterization of amenability via a fixed point property.
Motivated by this fact  we prove a fixed point theorem for actions of discrete groups on certain compact
subsets of spaces of the $\ell_{\infty}(G,X)$ type, equipped with a weak-type topology
(Theorem \ref{theorem : fixed point}). This topology,
which we call the ultra-weak topology, is induced by $\ell_{\infty}(G,X^*)$ viewed as maps into 
$\ell_{\infty}(G)$ equipped with its weak-* topology. This fixed point theorem can be viewed as a weak
analogue of Day's theorem, which holds for all finitely generated groups.

Finally, in the last section we use the above results to define a 
notion of weak exactness for some Banach algebras (Definition \ref{definition : exact Banach algebra}). 
For $C^*$-algebras the notion of exactness
is well-studied, see \cite{brown-ozawa}, and it would be interesting to try to extend such results
to the setting of 
Banach algebras.  One can compare this with the case of a $C^*$-algebra $A$, for which amenability of $A$ 
as a Banach algebra  is equivalent to nuclearity.

We are grateful to the referee for suggesting many valuable improvements to the first version of this paper.
\setcounter{tocdepth}{1}
\tableofcontents

\section{Modules and topologies}
\subsection{Actions}
Let $G$ be a finitely generated group.
A bounded Banach $G$-module  is a Banach space $X$ with a representation of $G$ on $X$, $g\mapsto \pi_g$,  where each $\pi_g$ is a bounded linear operator on $X$,
satisfying $\sup_{g\in G}\Vert \pi_g\Vert<\infty$. Then the dual, $X^*$, is also a bounded Banach
$G$-module with the representation $\overline{\pi}_g=\pi_{g^{-1}}^*$.

In general we denote the action of $G$ on $X$
by $gx$.
Given a bounded Banach $G$-module $X$, we consider the action of $G$ on $\ell_{\infty}(G,X)$
defined by 
$$(g* f)_h=g(f_{g^{-1}h}),$$
for $g,h\in G$ and $f\in \ell_{\infty}(G,X)$.
Then the induced action on $\ell_{\infty}(G,\ell_{\infty}(G,X))$ will be denoted $g\star f$ for 
$f\in \ell_{\infty}(G,\ell_{\infty}(G,X))$ and $g\in G$.

\subsection{Pairings}
Let $X$ be a Banach space.
Denote by $1_G$ the identity in $\ell_{\infty}(G,X)$
and by $\mathds{1}_G$ the identity in 
$\ell_{\infty}(G\times G)$.
Given a function $\xi\in \ell_{\infty}(G\times G)$ we will view it as  $\xi\in \ell_{\infty}(G,\ell_{\infty}(G))$
by taking $g\mapsto \xi(g, \cdot)=\xi_g\in \ell_{\infty}(G)$. We then say that $\xi$ is finitely  supported
if there exists a finite set $F\subseteq G$ such that $\xi_g=0$ whenever $g\in G\setminus F$.  That is,
as a function on $G\times G$, $\xi$ is finitely supported in the first variable.

A finitely supported function $\xi\in\ell_{\infty}(G,\ell_{\infty}(G))$  induces a bounded linear operator
$$\langle \xi, \cdot\rangle_{\C}:\ell_{\infty}(G,\ell_{\infty}(G,X))\to\ell_{\infty}(G,X)$$
by the formula
$$\langle \xi,f\rangle_{\C}=\sum_{g\in G}\xi_g f_g.$$

Define the action of $\ell_{\infty}(G)$ on $\ell_{\infty}(G,X)$ by
multiplication:
$$(a\bullet f)_g=a_g f_g,$$
and the action of $\ell_{\infty}(G)$ on $\ell_{\infty}(G,\ell_{\infty}(G,X))$
$$(a f)_g=a\bullet \xi_g.$$
Then the operator $\langle\xi, \cdot\rangle_{\C}$ is $\ell_{\infty}(G)$-linear, in the sense that
$$\langle\xi, a f\rangle_{\C}=\langle a\xi, f\rangle_{\C}=a\langle\xi,f\rangle_{\C}.$$
For each $g\in G$ we define the element $\delta_g\in \ell_{\infty}(G,\ell_{\infty}(G))$ by setting
$$(\delta_g)_h=\left\lbrace\begin{array}{ll}
1_G&\text{ if } g=h\\
0&\text{ otherwise}.
\end{array}\right.$$
Thus $\mathds{1}_G=\sum_{g\in G}\delta_g$.

\subsection{The  weak-* operator topology on $\EL(X,M)$}

We will denote weak-* limits by $w^*-\lim$.
Let $X$ be a Banach space and $M$ be a dual space.
Consider the space $\EL(X,M)$ of bounded linear maps from $X$ to $M$, with its natural
operator norm, which we denote by $\Vert\cdot\Vert_{\EL}$.
Every element $\xi\in X$ defines a map $\hat{\xi}:\EL(X,M)\to M$ by the formula
$$\hat{\xi}(T)=T(\xi)$$
for every $T\in \EL(X,M)$. This defines a natural embedding
$$i:X \to \EL\left(\EL(X,M),M \right).$$ 

We denote the natural norm on 
$\EL\left(\EL(X,M),M \right)$ by $\Vert \cdot \Vert_{\EL\EL}$.
We have $\Vert \hat{\xi}\Vert_{\EL\EL}= \Vert \xi\Vert_{X}$ for every $\xi\in X$.
Let $\widehat{B}_{X}\subseteq \EL\left(\EL(X,M),M \right)$ denote 
the image of the unit ball  $B_{X}$ of $X$ under the inclusion $i$.
\begin{definition}\label{definition : weak-star topology}
The  weak-* operator topology on $\EL(X,M)$ is defined to be the weakest topology 
for which all operators in $\widehat{B}_{X}$ are continuous when $M$ is equipped with its weak-*
topology.
\end{definition}

Limits in the  weak-* operator topology on $\EL(X,M)$ will be 
denoted ${\mathcal{W}^*}-\lim$. 
The proof  of the following lemma is analogous to the proof of the Banach-Alaoglu theorem.
\begin{lemma}\label{lemma: unit ball is compact in the weak topology}
The unit ball of $\EL(X,M)$ is compact in the  weak-* operator topology.
\end{lemma}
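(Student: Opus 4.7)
The plan is to mimic the classical Banach--Alaoglu argument via Tychonoff. First, I would introduce the evaluation embedding
\[
\Phi: B_{\EL(X,M)} \longrightarrow \prod_{\xi \in B_X} M, \qquad \Phi(T) = (T\xi)_{\xi \in B_X},
\]
whose image lies in $\prod_{\xi \in B_X} B_M$ because $\Vert T\xi\Vert_M \le \Vert T\Vert_{\EL}\Vert\xi\Vert_X \le 1$. Since $M$ is assumed to be a dual space, ordinary Banach--Alaoglu gives that $B_M$ is weak-$*$ compact, and by Tychonoff the product $K := \prod_{\xi \in B_X}(B_M, w^*)$ is compact. The next step is to observe that, by Definition \ref{definition : weak-star topology}, the weak-$*$ operator topology on $\EL(X,M)$ is precisely the initial topology generated by the coordinate maps $\hat{\xi}$ for $\xi \in B_X$ into $(M, w^*)$; hence $\Phi$ is continuous, injective by homogeneity, and a homeomorphism onto its image. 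Compactness of $B_{\EL(X,M)}$ then reduces to showing the image is closed in $K$.

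The closedness verification is where the only real work lies. Given a net $(T_\alpha)$ in $B_{\EL(X,M)}$ with $T_\alpha \xi \xrightarrow{w^*} m_\xi$ for every $\xi \in B_X$, I would define a candidate limit operator $T : X \to M$ by homogeneous extension, setting $T(0)=0$ and $T\xi := \Vert\xi\Vert_X \, m_{\xi/\Vert\xi\Vert_X}$ for $\xi \ne 0$. Checking linearity of $T$ uses the fact that $(M, w^*)$ is a topological vector space: addition and scalar multiplication are (jointly) continuous there, so passing to the weak-$*$ limit in $T_\alpha(\xi+\eta) = T_\alpha\xi + T_\alpha\eta$ (after rescaling $\xi,\eta,\xi+\eta$ into $B_X$) yields $T(\xi+\eta) = T\xi + T\eta$. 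The norm bound $\Vert T\Vert_{\EL} \le 1$ is then obtained from the weak-$*$ lower semicontinuity of the norm on $M$, which is a supremum of weak-$*$ continuous linear functionals given by evaluation against elements of the predual's unit ball; this yields $\Vert T\xi\Vert_M \le \liminf_\alpha \Vert T_\alpha\xi\Vert_M \le \Vert\xi\Vert_X$.

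The main obstacle, and essentially the only nontrivial point, is the bookkeeping needed to make $T$ coherently defined, linear, and bounded at the limit, so that $\Phi(T)$ realizes the given limit point in $K$. Once that is in place, everything else is a direct transposition of the standard Banach--Alaoglu/Tychonoff machinery into the present framework.
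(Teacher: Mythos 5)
Your proof is correct and follows exactly the route the paper intends: the paper gives no written proof, stating only that the argument is ``analogous to the proof of the Banach--Alaoglu theorem,'' and your Tychonoff embedding into $\prod_{\xi\in B_X}(B_M,w^*)$ with the closedness check via a homogeneously extended limit operator is precisely that standard argument. Nothing to add.
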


We have the following description of the  weak-* operator topology 
on $\EL(X,M)$.
\begin{proposition}\label{corollary : weak-C topology is pointwise weak* topology}
Let $X$ be a Banach space and let $\{T_{\beta}\}$ be a net in $\EL(X,M)$.
The following conditions are equivalent:
\begin{enumerate}
\renewcommand{\labelenumi}{\normalfont{(\alph{enumi})}}
\item ${\mathcal{W}^*}-\lim_{\beta} T_{\beta}=T$,
\item $w^*-\lim_{\beta} T_{\beta}(x)=T(x)$ in $M$ for every $x\in X$.
\end{enumerate}
\end{proposition}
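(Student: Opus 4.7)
The plan is to prove both implications essentially by unpacking the definition of the weak-* operator topology as an initial topology.

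For the direction (a) $\Rightarrow$ (b), I would argue directly from the definition. By Definition \ref{definition : weak-star topology}, for every $\xi \in B_X$ the evaluation map $\hat{\xi}:\EL(X,M)\to M$ is continuous when the domain carries the weak-* operator topology and $M$ carries its weak-* topology. So if $\mathcal{W}^*-\lim_\beta T_\beta = T$, then $\hat\xi(T_\beta) = T_\beta(\xi) \to T(\xi) = \hat\xi(T)$ in the weak-* topology of $M$ for every $\xi \in B_X$. For arbitrary $x\in X$, writing $x = \|x\|\,\xi$ with $\xi \in B_X$ and using the linearity of the $T_\beta$ and $T$ together with the weak-* continuity of scalar multiplication gives $T_\beta(x) \to T(x)$ in the weak-* topology.

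For the direction (b) $\Rightarrow$ (a), I would use the fact that convergence of nets in an initial topology is characterized by convergence after composition with each defining map. By construction, the weak-* operator topology on $\EL(X,M)$ is the initial topology induced by the family $\{\hat\xi : \xi \in B_X\}$ with $M$ carrying its weak-* topology. Thus a subbase at $T$ is given by the sets $\hat\xi^{-1}(U)$ for $\xi \in B_X$ and $U$ weak-* open in $M$ containing $T(\xi)$. Given such a subbasic neighborhood, hypothesis (b) yields $T_\beta(\xi) \to T(\xi)$ in the weak-* topology of $M$, so eventually $T_\beta(\xi) \in U$, i.e.\ $T_\beta \in \hat\xi^{-1}(U)$. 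Taking finite intersections to pass from the subbase to a base of neighborhoods of $T$, we conclude $\mathcal{W}^*-\lim_\beta T_\beta = T$.

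There is no serious obstacle; the only small point to verify is the passage in (a) $\Rightarrow$ (b) from $\xi \in B_X$ to general $x\in X$, which is handled by homogeneity, and the passage in (b) $\Rightarrow$ (a) from $x\in X$ back to $\xi \in B_X$, which is handled by rescaling (any $\xi\in B_X$ is in particular in $X$). In effect, the proposition is a direct translation of the universal property of the initial topology into the language of nets, specialized to our situation.
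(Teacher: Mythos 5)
Your proof is correct: the paper states this proposition without proof, treating it as an immediate consequence of Definition \ref{definition : weak-star topology}, and your argument --- unpacking the weak-* operator topology as the initial topology induced by $\{\hat{\xi}:\xi\in B_X\}$, using the net characterization of initial topologies for (b) $\Rightarrow$ (a), and handling the passage between $B_X$ and all of $X$ by homogeneity --- is exactly the routine verification the authors intend. No gaps.
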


In the case $Y=\ell_1(G)$ and $Y^*=\C$ we can identify $\EL(X,\C)$ with $\ell_{\infty}(G,X^*)$.
The latter space is naturally the dual of $\ell_1(G,X)$ and can be equipped with the 
corresponding weak-*
topology. The ${\mathcal{W}^*}$-topology and the weak-* topology defined above
 agree on bounded
subsets of $\EL(X,\C)$.

\section{Weak invariant expectations}
In \cite{douglas-nowak} we proved a characterization of exactness in terms of 
invariant expectations; that is, operators whose properties are similar to properties 
of invariant means.
We show that a weak version of such an operator always exists.

\setcounter{aux1}{\value{section}}
\setcounter{aux2}{\value{theorem}}
\setcounter{section}{0}
\setcounter{theorem}{0}

\begin{theorem}\label{theorem : existence of weak invariant expectations}
Let $G$ be a finitely generated group and let $X$ be a bounded Banach $G$-module.
Then there exists a continuous linear map
$$E:\ell_{\infty}(G, \ell_{\infty}(G,X^*))\to \ell_{\infty}(G,X^*),$$
called a weak invariant expectation on $G$ with coefficients in $X^*$,
such that 
\begin{enumerate}
\item\label{item: G-map}$E(g \star f)=g*(E(f))$  for every $g\in G$ and $f\in \ell_{\infty}(G,\ell_{\infty}(G,X^*))$,
\item\label{item: ell_infty map} $E(a f)=a \bullet E(f)$ for every $a \in \ell_{\infty}(G)$ and $f\in \ell_{\infty}(G,\ell_{\infty}(G,X^*))$, and
\item $E={\mathcal{W}^*}-\lim_{\beta}\langle\xi_{\beta},\cdot\rangle_{\ell_{\infty}(G)}$ in 
$\EL(\ell_{\infty}(G,\ell_{\infty}(G,X^*)), \ell_{\infty}(G,X^*))$, where the $\xi_{\beta}\in\ell_{\infty}(G,\ell_{\infty}(G))$ satisfy
\begin{enumerate}
\item every $\xi_{\beta}$ is finitely supported,
\item $\xi_{\beta}\ge 0$ as a function on $G\times G$, and 
\item $\sum_{g\in G}({\xi}_{\beta})_g=1_G$.
\end{enumerate}
\end{enumerate}
\end{theorem}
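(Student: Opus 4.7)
The plan is to construct an explicit net $(\xi^\beta)$ satisfying (a)--(c), produce $E$ as a cluster point of the operators $T_\beta := \langle \xi^\beta, \cdot\rangle_{\ell_{\infty}(G)}$ in the weak-$*$ operator topology (via Lemma \ref{lemma: unit ball is compact in the weak topology}), and verify that conditions (1) and (2) are inherited from the approximate versions already satisfied by the $T_\beta$.

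Concretely, let $(F_\beta)$ be the net of finite subsets of $G$ directed by inclusion, each containing a ball $B(e, n_\beta)$ with $n_\beta \to \infty$. For each $\beta$ fix any map $c_\beta : G \to F_\beta$ that restricts to the identity on $F_\beta$ (for instance $c_\beta(h) = e$ whenever $h \notin F_\beta$), and set
$$
(\xi^\beta_g)_h := \begin{cases} 1 & \text{if } g = c_\beta(h), \\ 0 & \text{otherwise}. \end{cases}
$$
Then $\xi^\beta \in \ell_{\infty}(G, \ell_{\infty}(G))$ is supported in $g \in F_\beta$, is non-negative, and satisfies $\sum_g \xi^\beta_g = 1_G$, so (a)--(c) hold. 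The associated operator is the diagonal-style evaluation $T_\beta(f)(h) = f_{c_\beta(h)}(h)$, of norm at most $1$. By Lemma \ref{lemma: unit ball is compact in the weak topology} a subnet of $(T_\beta)$ converges to some $E$ in the weak-$*$ operator topology, which immediately yields (3). Condition (2) passes to the limit because multiplication by $a \in \ell_{\infty}(G)$ is weak-$*$ continuous on $\ell_{\infty}(G, X^*)$ (its preadjoint is multiplication by $a$ on $\ell_1(G, X)$).

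The main step is (1). Unfolding the definitions of $g \star$ and $g *$ yields
$$
T_\beta(g \star f)(h) - (g * T_\beta(f))(h) = \overline{\pi}_g\bigl(f_{g^{-1} c_\beta(h)}(g^{-1}h) - f_{c_\beta(g^{-1}h)}(g^{-1}h)\bigr).
$$
Whenever both $h \in F_\beta$ and $g^{-1}h \in F_\beta$, one has $c_\beta(h) = h$ and $c_\beta(g^{-1}h) = g^{-1}h$, so the right-hand side vanishes; elsewhere it is uniformly bounded in $X^*$-norm by $2C\|f\|_\infty$, with $C = \sup_k \|\pi_k\|$. Hence for every $\phi \in \ell_1(G, X)$,
$$
\bigl|\langle \phi, T_\beta(g \star f) - g * T_\beta(f)\rangle\bigr| \leq 2C\|f\|_\infty \sum_{h \notin F_\beta \cap gF_\beta} \|\phi(h)\|_X.
$$
Since $F_\beta \cap gF_\beta \supseteq B(e, n_\beta - |g|)$ exhausts $G$, the $\ell_1$-summability of $\phi$ forces the right-hand side to $0$. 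Combined with weak-$*$ continuity of $g*$ on $\ell_{\infty}(G, X^*)$, this gives $E(g \star f) = g * E(f)$.

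The main obstacle is picking the right family $\xi^\beta$: no uniform averaging can succeed outside the amenable case, so the key is to let the ``measures'' $\mu^\beta_h := \delta_{c_\beta(h)}$ depend on $h$ and concentrate at $h$ whenever possible. This confines the equivariance defect to the ``boundary'' $G \setminus (F_\beta \cap gF_\beta)$, which is $\ell_1$-negligible in the limit --- precisely the weak-$*$ behavior needed for (1).
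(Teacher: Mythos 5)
Your proposal is correct and is essentially the paper's own proof: your net $\xi^{\beta}$, in the special case $F_{\beta}=B(n)$ and $c_{\beta}\equiv e$ off $F_{\beta}$, is exactly the paper's $\xi_n=1_{B(n)}\Delta+1_{G\setminus B(n)}\delta_e$, and your cluster point is precisely the paper's diagonal evaluation $(Ef)(g)=f(g,g)$. The only difference is organizational: the paper defines $E$ by that explicit formula first, which makes properties (1) and (2) immediate, and then checks the $\mathcal{W}^*$-convergence by testing against finitely supported $\eta\in\ell_1(G,X)$, whereas you extract $E$ by compactness and recover equivariance through the (correct) boundary-defect estimate over $G\setminus(F_{\beta}\cap gF_{\beta})$.
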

\setcounter{section}{\value{aux1}}
\setcounter{theorem}{\value{aux2}}

\begin{proof}
Define $E$ by the following formula:
$$(Ef)(g)=f(g,g),$$
where $f \in \ell_{\infty}(G,\ell_{\infty}(G,X^*))$ is viewed as an element of $\ell_{\infty}(G\times G,X^*)$. It is easy to check that (\ref{item: G-map}) and (\ref{item: ell_infty map}) are satisfied.

To prove the last property fix a finite generating set for $G$.
Consider  $\Delta\in \ell_{\infty}(G\times G)$ defined by
$$\Delta(g,h)=\left\lbrace\begin{array}{ll}
1&\text{ if } g=h\\
0&\text{ otherwise}.
\end{array}\right.$$
For a subset $F\subseteq G$ denote by $1_F$ the characteristic function of $F$ and let 
$B(n)\subset G$ denote the ball of radius $n$ centered at the identity element.
Let  $\xi_n:G\to \ell_{\infty}(G)$  be defined by 
$$\xi_{n}=1_{B(n)}\Delta+1_{G\setminus B(n)}\delta_e.$$
The operators $\langle\xi_n,\cdot\rangle_{\C}$ induced by the $\xi_n$ are elements of 
the unit ball of the space $\EL(\ell_{\infty}(G\times G,X^*),\ell_{\infty}(G,X^*))$. 

For any finitely supported $\eta\in \ell_1(G,X)$ we have
\begin{eqnarray*}
\big\langle \langle \xi_n, f\rangle_{\ell_{\infty}(G)},\eta\big\rangle&=
&\sum_{g\in \supp \eta}\left( \left(\langle\xi_n,f\rangle_{\ell_{\infty}(G)}\right)_g\right) (\eta_g)
\end{eqnarray*}
Since the support of $\eta$ is finite, $\supp\eta\subseteq B(n_0)$ for some $n_0$. Then
for all $n\ge n_0$ we have 
$$\left\langle\langle\xi_n,f\rangle_{\ell_{\infty}(G)},\eta \right\rangle=\langle Ef,\eta\rangle.$$
Therefore,
$$w^*-\lim_{n\to \infty}\langle \xi_n,f\rangle_{\ell_{\infty}(G)}=Ef,$$
in $\ell_{\infty}(G,X^*)$,
which shows that $E=\mathcal{W}^*-\lim \langle \xi_n,\cdot\rangle_{\ell_{\infty}(G)}$ and  
proves the claim.
\end{proof}

We remark that a weak invariant expectation with coefficients in $X^*=\RR$ equipped
with a trivial $G$-action is a weak analogue of the invariant expectation considered in
\cite{douglas-nowak}. Indeed, in that case the domain of the weak invariant expectation
is $\ell_{\infty}(G,\ell_{\infty}(G))\simeq \ell_{\infty}(G\times G)$, which is a subspace of the
space $\EL(\ell_u(G),\ell_{\infty}(G))$ of bounded linear maps from the uniform convolution
algebra $\ell_u(G)$ to $\ell_{\infty}(G)$.

\section{Applications}
\subsection*{I. Relative injectivity of Hopf $G$-modules}

Let $X$ be a left Banach $G$-module.

\begin{definition}
A subspace $\mathcal{E}\subseteq \ell_{\infty}(G,X^*)$ is a Hopf $G$-module
if it is both a $G$-submodule and an $\ell_{\infty}(G)$-submodule with respect to 
the actions $*$ and $\bullet$, respectively. 
\end{definition}

Vanishing of bounded cohomology with coefficients in Hopf $G$-modules was studied
in \cite{douglas-nowak}.

The notion of relative injectivity
 is a standard tool in the theory of Hochschild cohomology of Banach
algebras and  bounded cohomology of groups, see  for example
\cite{ivanov,monod-lnm,runde-lectures,sheinberg}, since it
implies the vanishing of cohomology groups in all positive degrees.
The definitions we use are from \cite{monod-lnm}.

A continuous linear map $f:M\to N$ between Banach spaces is admissible if
there is a linear operator $T:N\to M$ such that $\Vert T\Vert\le 1$ and $fTf=f$.
We assume that all $G$-module maps between  bounded Banach $G$-modules are continuous.

\begin{definition}\label{definition : relative injectivity}
A bounded Banach $G$-module $\mathcal{E}$ is relatively injective if for every  injective 
admissible $G$-morphism
$i:M\to N$ and any $G$-morphism $f:M\to \mathcal{E}$ there is a $G$-morphism
$\overline{f}:N\to \mathcal{E}$ such that $\overline{f}\circ i=f$ and $\Vert \overline{f}\Vert\le \Vert f\Vert$.
\end{definition}

$$\begin{diagram}
M&\rInto_{i}&N\\
\dTo_f&\ldDashto_{\ \ \overline{f}}\\
\mathcal{E}&&
\end{diagram}$$

For a Banach $G$-module $\mathcal{E}$ the module $\ell_{\infty}(G,\mathcal{E})$ is relatively injective \cite{monod-lnm}.
If the injection $\iota:\mathcal{E}\to \ell_{\infty}(G,\mathcal{E})$,
$\iota(x)=x 1_G$, admits a right inverse $E_h$ of norm 1 which commutes with the action of $G$
then the module $\mathcal{E}$ is also relatively injective. Indeed, given the diagram 
$$ \begin{diagram}
M&\rInto^{i}&N\\
\dTo<f&\ldDashto<{\ \ \overline{f}}&\dDashto>{\ \ \overline{\iota\circ f}}\\
\mathcal{E}&\pile{\rTo^{\ \ \ \ \ \ \ \ \ \ \ \ \ \iota\ \ \ \ \ \ \ \ \ \ \ \ }\\ \lTo_{E_h}}&\ell_{\infty}(G,\mathcal{E})\\
\end{diagram}$$
one verifies that $E_h\circ \left(\overline{\iota\circ f}\right)\circ i=f$
and that $\Vert E_h\circ \overline{\iota\circ f}\Vert=\Vert f\Vert$.

We now use the weak invariant expectation to show that Hopf $G$-modules satisfy the 
conditions of Definition \ref{definition : relative injectivity}.
\begin{theorem}\label{theorem : relative injectivity of Hopf G-modules}
Every weak-* closed Hopf $G$-module is a  relatively injective $G$-module.
\end{theorem}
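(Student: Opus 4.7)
The plan is to exploit the reduction spelled out immediately before the theorem: since $\ell_{\infty}(G,\mathcal{E})$ is a relatively injective $G$-module, it suffices to produce a $G$-equivariant norm-one left inverse $E_h$ of the canonical inclusion $\iota\colon\mathcal{E}\to\ell_{\infty}(G,\mathcal{E})$, $\iota(x)=x\cdot 1_G$. I will construct $E_h$ simply as the restriction to $\ell_{\infty}(G,\mathcal{E})$ of the weak invariant expectation $E$ from Theorem \ref{theorem : existence of weak invariant expectations}, where the latter is applied with coefficients in $X^*$ for the ambient $G$-module $X$ such that $\mathcal{E}\subseteq\ell_{\infty}(G,X^*)$.

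There are then three things to verify: (a) the restricted map actually takes values in $\mathcal{E}$ rather than merely in $\ell_{\infty}(G,X^*)$; (b) it retracts $\iota$, i.e.\ $E\circ\iota=\mathrm{id}_{\mathcal{E}}$; (c) it is $G$-equivariant and contractive. Property (b) is a direct computation from the normalization $\sum_{g}(\xi_{\beta})_g=1_G$: for any $x\in\mathcal{E}$ the constant function $\iota(x)$ satisfies $\langle\xi_{\beta},\iota(x)\rangle_{\C}=\sum_{g}(\xi_{\beta})_g\bullet x=1_G\bullet x=x$, and passing to the $\mathcal{W}^*$-limit yields $E(\iota(x))=x$. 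Property (c) follows from Theorem \ref{theorem : existence of weak invariant expectations}(\ref{item: G-map}) together with the observation (made inside the proof of Theorem \ref{theorem : existence of weak invariant expectations}) that each $\langle\xi_{\beta},\cdot\rangle_{\C}$ lies in the unit ball of the relevant $\EL$-space; since the unit ball is $\mathcal{W}^*$-closed by Lemma \ref{lemma: unit ball is compact in the weak topology}, the limit $E$ is a contraction as well.

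I expect property (a) to be the only real obstacle, and it is where both hypotheses on $\mathcal{E}$, being a Hopf submodule and being weak-$*$ closed, come into play. For $f\in\ell_{\infty}(G,\mathcal{E})$ each coordinate $f_g$ lies in $\mathcal{E}$, and because $\mathcal{E}$ is closed under the $\bullet$-action of $\ell_{\infty}(G)$ (the Hopf property), every summand $(\xi_{\beta})_g\bullet f_g$ lies in $\mathcal{E}$. Since $\xi_{\beta}$ is finitely supported, $\langle\xi_{\beta},f\rangle_{\C}$ is then a \emph{finite} sum of elements of $\mathcal{E}$, hence itself in $\mathcal{E}$. Finally, Proposition \ref{corollary : weak-C topology is pointwise weak* topology} converts the $\mathcal{W}^*$-operator convergence $E=\mathcal{W}^*-\lim\langle\xi_{\beta},\cdot\rangle_{\C}$ into pointwise weak-$*$ convergence in $\ell_{\infty}(G,X^*)$; thus $E(f)$ is a weak-$*$ limit of elements of $\mathcal{E}$, and weak-$*$ closedness of $\mathcal{E}$ forces $E(f)\in\mathcal{E}$. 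This completes the construction of $E_h$ and hence the proof.
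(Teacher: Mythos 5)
Your proposal is correct and follows essentially the same route as the paper: both reduce to producing a norm-one $G$-equivariant right inverse $E_h$ of $\iota$, take $E_h$ to be the weak invariant expectation composed with the coordinate-wise Hopf inclusion, and use the finite support of the $\xi_\beta$, the $\ell_{\infty}(G)$-module (Hopf) property, and weak-$*$ closedness of $\mathcal{E}$ to see that $E_h$ lands in $\mathcal{E}$. The only cosmetic difference is that you verify the retraction identity and the norm bound from the abstract net characterization in Theorem \ref{theorem : existence of weak invariant expectations}(3), whereas the paper reads both off the explicit diagonal formula $(E_h\eta)_g=(h(\eta_g))_g$; both verifications are valid.
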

\begin{proof}
Consider the following diagram:
$$\begin{diagram}
\mathcal{E}&\rInto^{\iota}&\ell_{\infty}(G,\mathcal{E})\\
\dInto_h&&\dInto_{\ell_{\infty}h}\\
\ell_{\infty}(G,X^*)&\pile{\lTo^{\ \ \ \ \ \ \ \ E\ \ \ \ \ \ \ }\\ \rInto_{\overline{\iota}}}&
\ell_{\infty}(G,\ell_{\infty}(G,X^*))
\end{diagram},$$
where $h$ is the natural Hopf inclusion of $\mathcal{E}$ into $\ell_{\infty}(G,X^*)$ 
for some $G$-module $X$, $\ell_{\infty}h$ is induced by applying $h$ coordinate-wise and 
$E$ is a weak invariant expectation. 
Define $E_h:\ell_{\infty}(G,\mathcal{E})\to\ell_{\infty}(G,X^*)$ by 
$$E_h=E\circ \ell_{\infty}h.$$ 
In that case, the explicit formula for $E$ yields
\begin{equation}\label{equation : explicit formula for injectivity}
(E_h\eta)_g=\left(h(\eta_g)\right)_g,
\end{equation}
for every $\eta\in \ell_{\infty}(G,\mathcal{E})$.

By the properties of $E$, for
every $\eta\in \ell_{\infty}(G,\mathcal{E})$ we have
\begin{eqnarray*}
E_h(\eta)&=&w^*-\lim_n\sum_{g\in G} (\xi_n)\eta_g,
\end{eqnarray*}
where the  $\xi_{n}$ are as in Theorem \ref{theorem : existence of weak invariant expectations}.
Since $\eta_g\in \mathcal{E}$, $\xi_n$ is finitely supported and 
$\mathcal{E}$ is closed under the action of $\ell_{\infty}(G)$, we have that
$\sum_{g\in G}(\xi_n)\eta_g$ is an element of $\mathcal{E}$ for every $\beta$. 
Also, $\mathcal{E}$ is weak-* closed and thus the limit belongs to $\mathcal{E}$.

Additionally, for $x\in \mathcal{E}$ it follows from (\ref{equation : explicit formula for injectivity})
that
$$E_h(\iota(x))=E(x1_G)=x,$$
for every $x\in \mathcal{E}$.
The fact that $E_h$ is $G$-equivariant
follows from the properties of $E$ and the fact that $E_h$ is a restriction of $E$ to a 
$G$-invariant subspace. Finally, it is
also easy to verify that $\Vert E_h\Vert=\Vert E\Vert=1$.
\end{proof}

Theorem \ref{theorem : relative injectivity of Hopf G-modules} 
allows one to deduce a vanishing theorem for bounded cohomology with coefficients 
in Hopf $G$-modules.
\begin{theorem}\label{theorem : vanishing of cohomology}
Let $G$ be a finitely generated group. Then the bounded cohomology 
$H^n_b(G,\mathcal{E})$ vanishes for every $n\ge 1$ and every 
weak-* closed Hopf $G$-module $\mathcal{E}$.
\end{theorem}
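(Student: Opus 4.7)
The plan is to deduce the vanishing directly from Theorem \ref{theorem : relative injectivity of Hopf G-modules} by invoking the standard principle in the theory of bounded cohomology that relatively injective coefficient modules have vanishing bounded cohomology in all positive degrees. This principle is a formal consequence of the comparison theorem for strong relatively injective resolutions as developed in \cite{monod-lnm} (see also \cite{ivanov}).

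To make the argument concrete, I would recall that $H^n_b(G,\mathcal{E})$ can be computed as the cohomology of the complex of $G$-invariants of any strong relatively injective resolution of $\mathcal{E}$. The canonical choice is the homogeneous bar resolution
$$ 0\to \mathcal{E}\to \ell_{\infty}(G,\mathcal{E})\to \ell_{\infty}(G^2,\mathcal{E})\to \cdots, $$
whose terms are always relatively injective (cf. \cite{monod-lnm}). By Theorem \ref{theorem : relative injectivity of Hopf G-modules}, the coefficient module $\mathcal{E}$ is itself relatively injective, so a cleaner alternative is the trivial resolution with $\mathcal{E}$ concentrated in degree zero; this resolution is strong (the contracting homotopy is the identity in degree zero and zero elsewhere) and relatively injective, and its $G$-invariant complex manifestly has zero cohomology in positive degrees. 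The comparison theorem then identifies this with $H^n_b(G,\mathcal{E})$ for $n\ge 1$, giving the vanishing.

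If one prefers to work directly with the bar resolution, then the retraction $E_h:\ell_{\infty}(G,\mathcal{E})\to \mathcal{E}$ constructed in the proof of Theorem \ref{theorem : relative injectivity of Hopf G-modules} can be applied coordinatewise in one of the $G$-variables to produce $G$-equivariant maps $s_n:\ell_{\infty}(G^{n+1},\mathcal{E})\to \ell_{\infty}(G^n,\mathcal{E})$ satisfying the standard simplicial contracting-homotopy identity. Passing to $G$-invariants then yields vanishing of the cohomology in positive degrees.

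The main obstacle is conceptual rather than technical: all the genuinely analytic content already lives in Theorem \ref{theorem : existence of weak invariant expectations} and the relative injectivity Theorem \ref{theorem : relative injectivity of Hopf G-modules}. Once those are in place, the passage to cohomology vanishing is an entirely formal application of the standard homological algebra of bounded cohomology of discrete groups, and the only care required is to verify that the resolution used is both strong and relatively injective in the sense of Definition \ref{definition : relative injectivity}.
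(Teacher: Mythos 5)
Your proposal is correct and is essentially the paper's own proof: the paper disposes of this theorem in one line by combining Theorem \ref{theorem : relative injectivity of Hopf G-modules} with \cite[Proposition 7.4.1]{monod-lnm}, whose content is exactly the standard principle you invoke (relatively injective coefficients have vanishing bounded cohomology in positive degrees). Your trivial-resolution argument is just the standard proof of that cited proposition unpacked, so there is no substantive difference in approach.
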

Theorem \ref{theorem : vanishing of cohomology} follows from \cite[Proposition 7.4.1]{monod-lnm}
and Theorem \ref{theorem : relative injectivity of Hopf G-modules}.

\subsection*{II. A fixed point theorem for actions on $\ell_{\infty}(G,X)$}

The existence of a weak invariant expectation allows one to prove a fixed point theorem for a
group acting on spaces of the type $\ell_{\infty}(G,X)$, where $X$ is a normed space. The fixed
point theorem 
we prove can be viewed as a weak analogue of Day's classical fixed point theorem for 
amenable groups
\cite{day}.

\begin{definition}
A subset $K\subseteq\ell_{\infty}(G,X)$ is called $\ell_{\infty}(G)$-convex if 
given any finite collection of positive elements $ a _1,\dots,  a _n\in\ell_{\infty}(G)$
such that $\sum a _i=1_G$, we have $\sum a _i x_i\in K$ for any $x_1,\dots, x_n\in K$.
\end{definition}

We equip $\ell_{\infty}(G,X)$ with a topology as follows. Every  
$\varphi\in\ell_{\infty}(G,X^*)$ induces a bounded linear operator  $T_{\varphi}:\ell_{\infty}(G,X)\to \ell_{\infty}(G)$ by
the formula
$$T_{\varphi}f (g)= \langle \varphi_g, f_g\rangle.$$
In particular, the inclusion $i:X^*\to \ell_{\infty}(G,X^*)$ as the constant functions 
allows one to interpret each element  of $X^*$ as such an operator.
 
\begin{definition}
Let $V\subseteq X^*$ be a weak-* dense subspace. The ultra-weak topology induced by $V$ on 
$\ell_{\infty}(G,X)$ is the weakest topology with respect to which every 
operator $T_{\varphi}$ induced by
$\varphi\in V$  is continuous, when $\ell_{\infty}(G)$ is equipped with its
natural weak-* topology.
\end{definition}
We will usually omit the reference to $V$.
One important property of the operators induced by elements of $V$ is that they
separate the points of $\ell_{\infty}(G,X)$. This property is crucial in our argument.
Note also that if $X=Y^*$ is itself a dual space, then we can take $V=X\subset X^{**}$. In that case the
ultra-weak topology on $\ell_{\infty}(G,Y^*)$ is precisely the ${\mathcal{W}^*}$-topology on $\ell_{\infty}(G,Y^*)$,
in the sense of the previous sections. 

An action of a group $G$ on a subset $K\subseteq \ell_{\infty}(G,X)$ is said to be $G$-affine if 
 $$g( a  x+ b  y)=(g* a ) gx+(g* b ) gy$$ 
 for $g\in G$, $x,y\in K$ and $ a , b \in \ell_{\infty}(G)$ such that $ a , b \ge 0$ and $ a + b =1_X$.
 Note that such an action is not, in general, inherited from an action on $\ell_{\infty}(G,X)$.

\begin{theorem}\label{theorem : fixed point}
Let $G$ be a finitely generated group, $X$ be a Banach space and $V\subseteq X^*$ 
be a weak-* dense
subspace. Then every $G$-affine action of $G$ on a  bounded,
$\ell_{\infty}(G)$-convex, ultra-weakly compact subset $K\subseteq \ell_{\infty}(G,X)$ has a fixed point.
\end{theorem}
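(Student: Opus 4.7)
The strategy is to mimic Day's classical fixed-point theorem for amenable groups, using the finitely supported approximations
$$\xi_n = 1_{B(n)}\Delta + 1_{G\setminus B(n)}\delta_e$$
from the proof of Theorem \ref{theorem : existence of weak invariant expectations} as asymptotic substitutes for an invariant mean. Pick any $x_0 \in K$ and form
$$y_n := \sum_{g\in G}(\xi_n)_g \bullet (gx_0) \in \ell_{\infty}(G, X).$$
The sum is finite since each $\xi_n$ is finitely supported in the first variable; positivity of $\xi_n$ and the normalization $\sum_g(\xi_n)_g = 1_G$, combined with the $\ell_{\infty}(G)$-convexity of $K$ and the fact that $K$ contains the orbit of $x_0$, force $y_n \in K$. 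Ultra-weak compactness of $K$ then yields a subnet $y_{n_\alpha}$ converging ultra-weakly to some $y_\infty \in K$.

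Next, I would show that for every $h\in G$ the sequence $hy_n$ tends ultra-weakly to the same limit $y_\infty$. Iterating the $G$-affine identity gives
$$hy_n = \sum_g (h*(\xi_n)_g) \bullet (hgx_0) = \sum_k (h*(\xi_n)_{h^{-1}k}) \bullet (kx_0).$$
Unwinding the explicit form of $\xi_n$ shows that $y_n(k) = (kx_0)(k)$ for $k\in B(n)$ and $y_n(k) = x_0(k)$ otherwise, and similarly $(hy_n)(k) = (kx_0)(k)$ for $k\in hB(n)$ and $(hy_n)(k) = (hx_0)(k)$ otherwise. Hence $y_n$ and $hy_n$ agree on the entire set $B(n)\cap hB(n)$. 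Consequently, for any $\varphi\in V$ and any $\eta\in\ell_1(G)$ of finite support, as soon as $n$ is large enough that $\supp\eta\subseteq B(n)\cap hB(n)$ one has $\langle T_\varphi(y_n - hy_n),\eta\rangle = 0$. The uniform bound $\Vert y_n - hy_n\Vert_{\infty} \le 2\sup_{y\in K}\Vert y\Vert_{\infty}$ (finite by boundedness of $K$) allows one to truncate any $\eta\in\ell_1(G)$ to finite support with arbitrarily small $\ell_1$-tail, yielding $\langle T_\varphi(y_n - hy_n),\eta\rangle \to 0$ for every $\varphi\in V$ and $\eta\in\ell_1(G)$. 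Thus $hy_n \to y_\infty$ ultra-weakly.

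The concluding step is to upgrade $hy_{n_\alpha}\to y_\infty$ to the genuine identity $hy_\infty = y_\infty$. This requires ultra-weak continuity of the map $y\mapsto hy$ on $K$, which I expect to be either a standing hypothesis on the action or a direct consequence of the $G$-affine axiom combined with the design of the ultra-weak topology, which is built precisely so that the natural $G$- and $\ell_{\infty}(G)$-module operations are continuous. Granted such continuity, the separation of points of $\ell_{\infty}(G, X)$ by the operators $T_\varphi$ for $\varphi\in V$ ensures uniqueness of ultra-weak limits and forces $hy_\infty = y_\infty$ for every $h\in G$. The principal technical effort, I expect, is confined to the combinatorial check in the second paragraph---exploiting the explicit $\xi_n$ to see that $y_n$ and $hy_n$ coincide on $B(n)\cap hB(n)$---after which the remainder is a standard Day-style compactness argument.
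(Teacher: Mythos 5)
Your proposal is correct in substance, but it reaches the theorem by a genuinely more hands-on route than the paper. The paper builds the very same almost-fixed net --- $x_{\beta}=\sum_{g}(\xi_{\beta})_g\, g\kappa_0\in K$, using $\ell_{\infty}(G)$-convexity and compactness exactly as you do --- but it derives invariance of the limit abstractly: it introduces the function system $\Aff(K,\ell_{\infty}(G))$ of ultra-weakly continuous, $\ell_{\infty}(G)$-convex maps $T:K\to\ell_{\infty}(G)$, the action $(g\cdot T)(x)=g*T(g^{-1}x)$, the orbit functions $f_{[T]}(g)=T(g\kappa_0)$, and then combines the identity $T(x_0)=E(f_{[T]})$ with the equivariance $E(g\star f)=g*E(f)$ of the weak invariant expectation to conclude $T(gx_0)=T(x_0)$ for every $T\in V$. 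You dispense with $E$ and $\Aff(K,\ell_{\infty}(G))$ entirely and check almost-invariance combinatorially from the explicit $\xi_n=1_{B(n)}\Delta+1_{G\setminus B(n)}\delta_e$: your observation that $y_n$ and $hy_n$ both equal $k\mapsto (kx_0)(k)$ on $B(n)\cap hB(n)$ is precisely what the paper's equivariance of $E$ encodes once one unwinds the diagonal formula $(Ef)(g)=f(g,g)$, and your truncation argument with the uniform bound from boundedness of $K$ correctly yields $T_{\varphi}(y_n-hy_n)\to 0$ weak-* for all $\varphi\in V$, hence $hy_{n_{\alpha}}\to y_{\infty}$. What your version buys is concreteness (an explicit sequence of almost-fixed points and a transparent F\o lner-style cancellation); what the paper's version buys is that it runs verbatim for any operator with the three properties of the existence theorem, not just the diagonal expectation, with the equivariance packaged once and for all.

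On the one point you flag: ultra-weak continuity of $y\mapsto hy$ is indeed \emph{not} a consequence of the $G$-affine axiom --- the action is abstract and, as the paper itself notes, need not come from an action on $\ell_{\infty}(G,X)$ --- so it is an additional implicit hypothesis. But this does not put you behind the paper: its proof needs exactly the same assumption at the corresponding spot. The lemma asserting that $g\cdot T$ again lies in $\Aff(K,\ell_{\infty}(G))$ verifies only $\ell_{\infty}(G)$-convexity; the weak-* continuity of $g^{-1}\cdot T$, i.e.\ of $x\mapsto T(gx)$ for $T=T_{\varphi}$, $\varphi\in V$, is needed to apply the key lemma to $g^{-1}\cdot T$ in the final computation, and since the ultra-weak topology is the initial topology for the family $\{T_{\varphi}\}_{\varphi\in V}$, this is equivalent to your continuity requirement. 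Modulo this shared implicit assumption (and the equally shared, routine induction extending the two-term affine identity to finite $\ell_{\infty}(G)$-convex combinations, where one must handle coefficients vanishing at some coordinates), your argument is complete.
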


\begin{proof}
We divide the proof into a few lemmas, with the assumptions for each of them being the same.
Fix $\kappa_0\in K$. Let $\Aff(K,\ell_{\infty}(G))$ denote the set of all weak-*
continuous  maps
$T:K\to \ell_{\infty}(G)$ which are $\ell_{\infty}(G)$-convex; that is,
$$T( a  x+ b  y)= a  T(x)+ b  T(y)$$
for $x,y\in K$ and  $ a , b \in \ell_{\infty}(G)$, $ a \ge 0$, $ b  \ge 0$ and 
$ a + b =1_G$.
Observe  that $\ell_{\infty}(G,X^*)\subseteq \Aff(K,\ell_{\infty}(G))$ when restricted to $K$.

The space $\Aff(K,\ell_{\infty}(G))$ plays, rougly speaking, the role of a ``dual space with coefficients in $\ell_{\infty}(G)$''.  
Given $T\in \Aff(K,\ell_{\infty}(G))$ and $g\in G$ define 
$$g\cdot T(x)=g*T(g^{-1}x),$$
for every $x\in K$.
\begin{lemma}
The operation $\cdot$ defines an action of $G$ on $\Aff(K,\ell_{\infty}(G))$.
\end{lemma}
\begin{proof}We only need to show that $g\cdot T$ is $\ell_{\infty}(G)$-convex.
For $ a , b \in \ell_{\infty}(G)$ such that 
$ a \ge 0$, $ b \ge 0$, $ a + b =1_G$ and $x,y\in K$, we have 
\begin{eqnarray*}
(g\cdot T)( a  x+ b  y)&=&g*\left(T(g^{-1}\left( a  x+ b  y\right)\right)\\
&=&g*\left( (g^{-1}* a ) T(g^{-1}x)+(g^{-1}* b ) T(g^{-1}y)\right)\\
&=& a \left(g*T(g^{-1}x)\right)+ b \left(g*T(g^{-1}y)\right)\\
&=& a (g\cdot T)(x)+ b (g\cdot T)(y).
\end{eqnarray*}
\end{proof}

For every $T\in \Aff(K,\ell_{\infty}(G))$ define $f_{[T]}:G\to \ell_{\infty}(G)$ by the formula
$$f_{[T]}(g)=T(g\kappa_0).$$
We have $f_{[T]}\in \ell_{\infty}(G,\C)$.

\begin{lemma}
There exists a point $x\in K$ such that $E(f_{[T]})=T(x)$ for every $T\in \Aff(K,\ell_{\infty}(G))$.
\end{lemma}
\begin{proof}Since $E={\mathcal{W}^*}-\lim_{\beta}\langle\xi_{\beta},\cdot\rangle_{\ell_{\infty}(G)}$ we have
\begin{eqnarray*}
\langle \xi_{\beta}, f_{[T]}\rangle_{\C}&=&\sum_{g\in G}(\xi_{\beta})_g(f_{[T]})_g\\
&=&\sum_{g\in G}(\xi_{\beta})_gT(g\kappa_0)\\
&=&\sum_{g\in G}T((\xi_{\beta})_g g\kappa_0)\\
&=&T\left(\sum_{g\in G}(\xi_{\beta})_g g\kappa_0\right)\\
&=&T\left(x_{\beta}\right),
\end{eqnarray*}
where we used the fact that $T$ is $\ell_{\infty}(G)$-linear and that the $\xi_{\beta}$ 
are finitely supported.
By the ultra-weak 
compactness of $K$ there exists a convergent subnet of the $x_{\beta}$, which we denote
 again by
$x_{\beta}$, and we define 
$x_0=\lim_{\beta}x_{\beta}$. Then for $T\in \Aff(K,\ell_{\infty}(G))$ we have
$$T(x_0)=w^*-\lim_{\beta} T(x_{\beta})=w^*-\lim_{\beta}
\langle f_{[T]},\xi_{\beta}\rangle_{\C}=E(f_{[T]}),$$
by  the ultra-weak continuity of $T$.
\end{proof}

\begin{lemma}
For $g\in G$ we have $f_{[g\cdot T]}=g\star f_{[T]}$.
\end{lemma}
\begin{proof}
For every $h\in G$ we have
\begin{eqnarray*}
\left(f_{[g\cdot T]}\right)_h&=&(g\cdot T)(h\kappa_0)\\
&=&g*\left(T(g^{-1}h\kappa_0)\right)\\
&=&g*\left(\left(f_{[T]}\right)_{g^{-1}h}\right)\\
&=&(g\star f_{[T]})_h.
\end{eqnarray*}
\end{proof}

We now verify that $x_0$ is a fixed point.
For an operator $T\in V\subseteq \Aff(K,\ell_{\infty}(G))$ we obtain
\begin{eqnarray*}
T(gx_0)&=&g*(g^{-1}\cdot T)(x_0)\\
&=&g*E(f_{[g^{-1}\cdot T]})\\
&=&g*E(g^{-1}\star f_{[T]})\\
&=& E(f_{[T]})\\
&=&T(x_0).
\end{eqnarray*}
Since elements of $V$ separate points of $K$,
it follows that $gx_0=x_0$ and $x_0$ is a fixed point, which completes the proof of Theorem 
\ref{theorem : fixed point}.
\end{proof}

We expect that the above fixed point theorem can be generalized to semigroups.

\subsection*{III. Weakly exact Banach algebras}

The above results on bounded cohomology of groups suggest one might define 
a notion of weak exactness for certain Banach algebras. Such algebras have
to be co-algebras in an appropriate sense, so that their duals are Banach algebras in a natural way 
as well. This requirement is a consequence of the fact that we have
used the structure of $\ell_1(G)$ as a Hopf algebra, not only as a Banach algebra.
We will consider only preduals of von Neumann algebras but it is clear that the definition
can be extended to other cases.

Let $M$ be a Hopf-von Neumann algebra and let $A=M_*$ 
denote a predual Banach algebra. 
Let $X$ be a right $A$-module and consider the space $\EL(X,M)$. The algebra 
$M$
is an $A$-bimodule in a natural way, as it is the dual of the $A$-bimodule $A$.
Thus $\EL(X,M)$ is an $A$-bimodule with the following actions:
$$
\begin{array}{ll}
(a\cdot T)(x)&=T(xa),\\
(T\cdot a)(x)&=T(x)a,
\end{array}
$$
for $a\in A$, $T\in \EL(X,M)$ and $x\in X$.
Since $M$ is an algebra, there is the additional structure of an $M$-module on $\EL(X,M)$ 
given by
$$(bT)(x)=b(T(x)),$$
for $b\in M$, $T\in \EL(X,M)$ and $x\in X$.
\begin{definition}
Let $M$ be a Hopf-von Neumann algebra and $A$ its predual Banach algebra.
A submodule of $\EL(X,M)$, which is both an $A$-bimodule  and  an
$M$-module
with respect to the structures described above, is called a Hopf $A$-bimodule.
\end{definition}
Recall that given a Banach algebra $A$ and an $A$-bimodule $\mathcal{E}$ one
can define the Hochschild cohomology groups $\mathcal{H}^*(A,\mathcal{E})$ of 
$A$ with coefficients in $\mathcal{E}$. In particular, the first cohomology group 
$\mathcal{H}^1(A,\mathcal{E})$ is defined as the quotient of the space of all $A$-derivations 
from $A$ into $X$ modulo the inner derivations, see for example \cite{dales-et-al,runde-lectures}.

\begin{definition}\label{definition : exact Banach algebra}
Let $M$ be a Hopf-von Neumann algebra and let $A$ be a predual Banach algebra of $M$.
We define $A$ to be weakly exact if 
$$\mathcal{H}^1(A,\mathcal{E})=0$$
for every $M$-submodule $\mathcal{E}\subseteq \EL(X,M)$, which is closed in the weak-* operator
topology, where $X$ is any left $A$-module. 
\end{definition}

It is natural to ask if dimension shifting preserves the class of Hopf modules over $A$ and,
more importantly, do algebras behave similarly to finitely generated groups:

\begin{question}
Is every Banach algebra $A$ as above weakly exact? 
\end{question}


\begin{thebibliography}{aaaaaa}

\bibitem{brodzki-niblo-nowak-wright}
\textsc{J.~Brodzki, G.A.~Niblo, P.~Nowak, N.~Wright}, \emph{Amenable actions, invariant means 
and bounded cohomology}, preprint arXiv:1004.0295v1.

\bibitem{brown-ozawa}
\textsc{N.~Brown, N.~Ozawa}, \emph{$C\sp *$-algebras and finite-dimensional approximations},
Graduate Studies in Mathematics, 88. American Mathematical Society, Providence, RI, 2008. 

\bibitem{dales-et-al}
\textsc{H.G.~Dales, P.~Aiena, J.~Eschmeier, K.~Laursen, G.A.~Willis}, \emph{Introduction to 
Banach algebras, operators, and harmonic analysis.} London Mathematical Society Student
Texts, 57. Cambridge University Press, Cambridge, 2003.

\bibitem{day}
\textsc{M.M.~Day}, \emph{Fixed-point theorems for compact convex sets.}
Illinois J. Math. 5 (1961) 585--590. 

\bibitem{douglas-nowak}
\textsc{R.G.~Douglas, P.W.~Nowak}, \emph{Invariant expectations and vanishing of 
bounded cohomology for exact groups}, J. Topol. Anal. 3 (2011), no. 1, 89--107

\bibitem{ivanov}
\textsc{N.V.~Ivanov}, \emph{Foundations of the theory of bounded cohomology.} 
Studies in topology, V. Zap. Nauchn. Sem. Leningrad. Otdel. Mat. Inst. Steklov. 
(LOMI) 143 (1985), 69--109, 177--178.

\bibitem{johnson-memoir}
\textsc{B.E.~Johnson}, \emph{Cohomology in Banach algebras.}
Memoirs of the American Mathematical Society, No. 127. American Mathematical Society, 
Providence, R.I., 1972.

\bibitem{monod-lnm}
\textsc{N.~Monod}, \emph{Continuous bounded cohomology of locally compact groups.} 
Lecture Notes in Mathematics, 1758. Springer-Verlag, Berlin, 2001

\bibitem{monod}
\textsc{N.~Monod}, \emph{A note on topological amenability}, preprint arXiv:1004.0199v2.

\bibitem{runde-lectures}
\textsc{V.~Runde}, \emph{Lectures on amenability}. Lecture Notes in Mathematics, 1774. 
Springer-Verlag, Berlin, 2002.

\bibitem{sheinberg}
\textsc{M.~Sheinberg}, \emph{Relatively injective modules over Banach algebras}, 
Vestnik Moskov. Univ. Ser. I Mat. Mekh. 3 (1971) 53-58; Moscow Univ. Math. Bull. 29 (1971).

\end{thebibliography}
\end{document}